\newtheorem{theorem}{Theorem}[section]
\newtheorem{corollary}[theorem]{Corollary}
\newtheorem{definition}[theorem]{Definition}
\newtheorem{lemma}[theorem]{Lemma}
\newtheorem{remark}[theorem]{Remark}
\numberwithin{equation}{section}
\newcommand{\nn}{\mathbb{N}}
\newcommand{\zz}{\mathbb{Z}}
\newcommand{\rr}{\mathbb{R}}
\newcommand{\set}[1]{\left\{ #1 \right\}}
\newcommand{\paren}[1]{\left( #1 \right)}
\newcommand{\sth}{\mid}
\newcommand{\norm}[1]{\left\lVert#1\right\rVert}
\newcommand{\ee}{\mathscr{E}}
\newcommand{\hh}{\mathscr{H}}
\DeclareMathOperator{\dom}{dom}
\renewcommand{\dim}{\operatorname{dim}}
\newcommand{\SG}{\text{SG}}
\newcommand{\notblock}{$(N-1)$-block}
\begin{document}
\title{Harmonic Gradients on Higher Dimensional Sierpi\'nski Gaskets}
\author{Luke Brown}
\address[Luke Brown]{Drexel University\newline%
\indent Department of Mathematics}
\email{lcbrown@wpi.edu}
\author{Giovanni Ferrer}
\address[Giovanni Ferrer]{University of Puerto Rico (Mayag\"uez)\newline%
\indent Department of Mathematics}
\email{giovanni.ferrer@upr.edu}
\author{Gamal Mograby}
\address[Gamal Mograby]{University of Connecticut\newline%
\indent Department of Mathematics}
\email{gamal.mograby@uconn.edu}
\author{Luke G. Rogers}
\address[Luke Rogers]{University of Connecticut\newline%
\indent Department of Mathematics}
\email{luke.rogers@uconn.edu}
\author{Karuna Sangam}
\address[Karuna Sangam]{Rutgers University\newline%
\indent Department of Mathematics}
\email{karuna.sangam@rutgers.edu}

\thanks{Research supported in part by the NSF Grants DMS-1659643 and DMS-1613025.}
\date\today
\subjclass[2010]{Primary 31C25, Secondary 28A80} %

\begin{abstract}
We consider criteria for the differentiability of functions with continuous Laplacian on the Sierpi\'nski Gasket and its higher-dimensional variants $SG_N$, $N>3$, proving results that generalize those of Teplyaev~\cite{sasha1}.  When $SG_N$ is equipped with the standard Dirichlet form and measure $\mu$ we show there is a full $\mu$-measure set on which continuity of the Laplacian implies existence of the gradient $\nabla u$, and that this set is not all of $SG_N$.  We also show there is a class of non-uniform measures on the usual Sierpi\'nski Gasket with the property that continuity of the Laplacian implies the gradient exists and is continuous everywhere, in sharp contrast to the case with the standard measure.
\end{abstract}

\maketitle
\pagestyle{headings}
\markleft{L. Brown, G. Ferrer, G. Mograby, L. G. Rogers, K. Sangam}

\section{Introduction}
In analysis on fractals the basic differential operator is a Laplacian obtained either by probabilistic methods~\cite{Barlowbook} or  as a renormalized limit of graph Laplacians~\cite{kigami1}.  There are then various approaches to defining a gradient, or first derivative, such as those in~\cite{Kusuoka2,Strichartz5,Kigami2,PelTep,Hino}, and related questions remain an active area of research~\cite{Hino2,Kajino,fabrice}.  The results of this paper are a contribution to understanding  what connection there is between smoothness measured using the Laplacian and the pointwise existence of a gradient, as the situation is very different than in the setting of Euclidean spaces or manifolds.

A fundamental result relating the regularity of the Laplacian and existence of a gradient was proven by Teplyaev in~\cite{sasha1}, who gave an example in which functions with continuous Laplacian are differentiable a.e.\ but can fail to be differentiable at a countable dense set of points.  The innovative idea was not the example itself, which was just the standard Sierpi\'nski gasket with its usual Laplacian and Bernoulli measure, but a concrete description of the gradient which allowed the points of differentiability to be described fairly precisely.  It should be noted that, on the Sierpi\'nski gasket, Teplyaev's gradient can be identified with that of Kusuoka~\cite{Kusuoka2}.

In Section~\ref{sec:gaskets} we introduce the $N$-vertex Sierpi\'nski Gasket $SG_N$ and its analytic structure, and in Section~\ref{sec:harmgradients} we review Teplyaev's gradient and some basic results from~\cite{sasha1}.  In Section~\ref{sec:dependenceonmu} we then build on Teplyaev's work to show how the structure of the measure affects the connection between Laplacian regularity and existence of a gradient.  Theorem~\ref{thm:uneqweightmsr} shows that, in contrast with the previously mentioned results from~\cite{sasha1}, if we equip the Sierpi\'nski gasket with a suitably chosen self-similar measure having unequal weights, then we find that functions with continuous Laplacian are not only differentiable everywhere but the gradient is continuous.  The discontinuity of natural gradients in the case of the standard measure is well-known, and it is rather unexpected that a continuous gradient can be obtained with such a simple modification of the measure.

Section~\ref{sec:diffonSGN} is concerned with differentiability results on the gaskets $SG_N$.  We show that if the self-similar Laplacian and  measure are symmetric under the symmetries of the underlying simplex then the results proved in~\cite{sasha1} can be generalized to $SG_N$, though the description of the points of differentiability is less explicit and some proofs are correspondingly more complicated.

\section{Higher Dimensional Sierpi\'nski Gaskets}\label{sec:gaskets}
Let $N \in \mathbb{N}$ with $N \geq 3$. We largely follow~\cite{Kigami2,dan1} in the following definitions and basic results. Note that, in the definition below, $SG_3$ is the usual Sierpi\'nski Gasket.

\begin{definition} \label{def:sgn}
Let $\{p_i \}^{N-1}_{i=0}$ be the vertices of a regular $N$ simplex in $\mathbb{R}^{N-1}$ such that $|p_j - p_k|=1$ if $j \neq k$. Let $\{F_i \}^{N-1}_{i=0}$, with $F_i: \mathbb{R}^{N-1} \rightarrow \mathbb{R}^{N-1}$, be the iterated function system defined by $F_j(x) = \frac{1}{2}(x-p_j) + p_j$. Then the $N$-dimensional Sierpi\'nski Gasket, denoted $SG_N$, is the unique non-empty compact set such that $SG_N = \bigcup^N_{j=0} F_j(SG_N)$. 
\end{definition}
\begin{definition} \label{def:words}
Let $S_N = \{0,1, \dots, N-1 \}$ and $\Omega_N = S_N^\mathbb{N}$ be  the collection of one-sided infinite words over $S_N$. Similarly, a finite word of length $m\in\mathbb{N}$ is an element of the $n$-fold product $S_N^m$.
\end{definition} 
For simplicity, we often omit the index $N$ in $\Omega_N$ and $S_N$ and write $\Omega$, $S$ respectively.
$SG_N$ is post-critically finite with post-critical set $V_0 = \{ p_0, \ldots , p_{N-1} \}$. 
We write $F_w = F_{w_1} \circ \ldots \circ F_{w_m}$, where $w=w_1 \ldots w_m$ is a finite word of length $m$ over the alphabet $S$.
Let $V_m = \bigcup_{w \in S^m} F_w(V_0)$ and  consider these points as vertices of a graph in which adjacency $x\sim_m y$ means there is a word $w$ of length $m$ such that $x,y \in F_{w}(V_0)$. 
A non-negative definite, symmetric, quadratic form on $SG_N$ may be defined 
as a limit of graph energies as follows.
\begin{definition} \label{def:energy}
Let $u,v$ be continuous functions on $SG_N$. The bilinear form
\[ \mathscr{E}_m(u,v) = \left(\frac{N+2}{N} \right)^m\sum_{x \underset{m}{\sim} y} \left(u(x)-u(y)\right)\left(v(x)-v(y)\right)\]
defines the graph energy of level $m$. 
\end{definition}
We write $\mathscr{E}_m(u)=\mathscr{E}_m(u,u)$. Then $\{\mathscr{E}_m(u) \}$ is a nondecreasing sequence of graph energies, so $\mathscr{E}(u) = \lim_{m\rightarrow \infty} \mathscr{E}_m(u)$ is well-defined; setting its domain to be $\mathop{dom}\mathscr{E} = \{ u:SG_N \to \mathbb{R} | \mathscr{E}(u)< \infty \}$ one obtains a non-negative definite, symmetric quadratic form that extends to the completion of $\cup_m V_m$, which can be shown to be $SG_N$, and the domain is uniform-norm dense in the continuous functions. For proofs of these facts see~\cite{kigami1}.  By construction this form is also self-similar in the sense that
\begin{equation}\label{eq:formselfsimonSGN}
\ee(f,f)=\Bigl( \frac{N+2}{N}\Bigr)\sum_{i=0}^{N-1}  \ee(f\circ F_i,f\circ F_i)
\end{equation}

      We equip $SG_N$ with a Bernoulli measure $\mu$ with
weights $\{0<\mu_i<1 \}^{N-1}_{i=0}$, $\sum_i\mu_i=1$, at which point $(\ee,\mathop{dom}\ee)$ is a Dirichlet form and we may define the Dirichlet $\mu$-Laplacian as follows (see~\cite{kigami1,bob1}):
\begin{definition} \label{def:laplacian}
Let $u \in \mathop{dom} \mathscr{E}$, and let $f$ be continuous. Then $u \in \mathop{dom}\Delta_\mu$ with $\Delta_\mu u = f$ if 
\begin{equation}
\mathscr{E}(u,v) = - \int_{\text{SG}_N} fv d\mu \text{ for all $v \in \mathop{dom_0}\mathscr{E}$}, \nonumber 
\end{equation}
where $\mathop{dom_0}\mathscr{E}$ is the subspace of $\mathop{dom}\mathscr{E}$ consisting of functions that vanish at $V_0$.
\end{definition}
A function $h \in \mathop{dom}\ee$ is called harmonic if it has specified values on $V_0$ and minimizes the graph energies
$\ee_n(u)$ for all $n \geq 1$. 
We can calculate $h|_{V_{m+1}}$ from $h|_{V_{m}}$ using  harmonic extension matrices.
\begin{definition}\label{def:harmextension}
Let $h$ be a harmonic function on $SG_N$. The harmonic extension matrices $\{A_i \}^{N-1}_{i=0}$ are defined by
\begin{eqnarray}
\begin{pmatrix}
h(F_i (p_0)) \\
\vdots \\
h(F_i (p_{N-1}))\\
\end{pmatrix}
= A_i
\begin{pmatrix}
h(p_0) \\
\vdots \\
h(p_{N-1})\\
\end{pmatrix}.
\nonumber
\end{eqnarray}
\end{definition}
The harmonic extension matrices for $SG_N$ are derived in~\cite{dan1}, and given by
\begin{eqnarray}
\label{eq:azero}
A_0 = \frac{1}{N+2}
 \begin{pmatrix}
  N+2 & \textbf{0}\\
  \textbf{2} & I_{N-1}+J_{N-1}\\
 \end{pmatrix}
\end{eqnarray}
where $I_{N-1}$ is the $(N-1)\times(N-1)$ identity matrix, $J_{N-1}$ is the $(N-1)\times(N-1)$ matrix with all entries equal $1$, and $\textbf{0}$ and $\textbf{2}$ are the $(N-1)$ size vectors with all entries 0 and 2 respectively. All other harmonic extension matrices can be found with cyclic row and column permutations.

Let $A_i$ be a harmonic extension matrix of $SG_N$ and $\sigma(A_i)$ the set of eigenvalues of $A_i$. Then the eigenspace of $\lambda \in \sigma(A_i)$, denoted by $E_i[\lambda]$, is the subspace of $\mathbb{R}^N$ spanned by the eigenvectors of $A_i$ corresponding to $\lambda$.
We need an elementary lemma.
\begin{lemma} \label{lem:eigenvalues2}
Let $A_i$ be a harmonic extension matrix for $SG_N$. Then, the eigenvalues of $A_i$ are
$\sigma(A_i)=\{1,\frac{N}{N+2},\frac{1}{N+2}\}$.
The corresponding eigenspaces have the following dimensions:
\begin{eqnarray}
dim \ E_i[1] & = & 1 \nonumber \\
dim \ E_i \big[ \tfrac{N}{N+2} \big] & = & 1 \nonumber \\
dim \ E_i \big[ \tfrac{1}{N+2} \big] & = & N-2 .\nonumber
\end{eqnarray}
\end{lemma}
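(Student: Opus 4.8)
The plan is to reduce everything to the single matrix $A_0$ and then compute its eigenvectors directly. Since the remaining extension matrices $A_i$ are obtained from $A_0$ by applying the same cyclic permutation to the rows and to the columns, each has the form $A_i = P_i A_0 P_i^{-1}$ for a permutation matrix $P_i$ (permutation matrices are orthogonal, so $P_i^T = P_i^{-1}$). Conjugation preserves both the characteristic polynomial and the dimension of every eigenspace, so it suffices to prove the claim for $A_0$; the statement then transfers verbatim to each $A_i$.

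For $A_0$ I would exploit its block lower-triangular structure. Writing a vector of $\rr^N$ as $v=(v_0,w)$ with $w\in\rr^{N-1}$, the eigenequation $A_0 v=\lambda v$ splits into the scalar equation $v_0=\lambda v_0$ together with $\tfrac{1}{N+2}\paren{2 v_0\mathbf{1}+(I_{N-1}+J_{N-1})w}=\lambda w$, where $\mathbf{1}$ is the all-ones vector. The key computational input is the spectrum of $J_{N-1}=\mathbf{1}\mathbf{1}^T$, which has eigenvalue $N-1$ on $\spn\set{\mathbf{1}}$ and eigenvalue $0$ on the hyperplane $\set{w:\mathbf{1}^T w=0}$; hence $I_{N-1}+J_{N-1}$ has eigenvalue $N$ with multiplicity $1$ and eigenvalue $1$ with multiplicity $N-2$.

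I would then solve the eigenequation case by case. For $\lambda=1$ the scalar equation is automatic and the second equation forces $w=v_0\mathbf{1}$, giving the one-dimensional eigenspace spanned by the all-ones vector in $\rr^N$ (as expected, since this corresponds to constant harmonic functions). For $\lambda\neq 1$ the scalar equation forces $v_0=0$, and the remaining equation reduces to $(I_{N-1}+J_{N-1})w=(N+2)\lambda w$; by the spectral description of $J_{N-1}$ this is solvable exactly when $\lambda=\tfrac{N}{N+2}$, with $w\in\spn\set{\mathbf{1}}$ (dimension $1$), or $\lambda=\tfrac{1}{N+2}$, with $w$ in the hyperplane $\set{w:\mathbf{1}^T w=0}$ (dimension $N-2$). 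Collecting these yields the three eigenvalues $1,\tfrac{N}{N+2},\tfrac{1}{N+2}$ with the asserted eigenspace dimensions $1,1,N-2$.

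Finally I would observe that the exhibited eigenspaces have dimensions summing to $1+1+(N-2)=N$, so $A_0$ is diagonalizable and no further eigenvalues occur. The computation is entirely elementary; the only point requiring a little care is that the lemma asserts geometric multiplicities (eigenspace dimensions), so I work directly with eigenvectors rather than with the characteristic polynomial, thereby sidestepping any algebraic-versus-geometric multiplicity gap that the off-diagonal block $2\mathbf{1}$ could in principle introduce. The bookkeeping of the permutation-conjugation reduction from a general $A_i$ to $A_0$ is the only other thing to check, and it is routine.
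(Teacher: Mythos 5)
Your proof is correct and follows essentially the same route as the paper, which simply exhibits the explicit eigenbasis $(1,\ldots,1)^T$, $(0,1,\ldots,1)^T$, and the $N-2$ vectors $(\ldots,0,1,-1,0,\ldots)^T$ for the eigenvalues $1$, $\tfrac{N}{N+2}$, $\tfrac{1}{N+2}$ respectively. Your block-triangular analysis of $A_0$ via the spectrum of $J_{N-1}$, together with the permutation-conjugation reduction, is just a more systematic way of arriving at the same eigenspaces and of making explicit the completeness count that the paper leaves implicit.
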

\begin{proof}
It can be easily verified that
that $(1,\ldots,1)^T$ is a simple eigenvector with eigenvalue $1$, $(0,1,\ldots,1)^T$ is a simple eigenvector with eigenvalue
$\tfrac{N}{N+2}$ and that 
$(\ldots,0,1,-1,0,\ldots)^T$ are $N-2$ eigenvectors corresponding to the eigenvalue 
$\tfrac{1}{N+2}$.
\end{proof}
Let $\mathscr{H}$ denote the space of harmonic functions.  Since these are determined by their values on $V_0$ this space is $N$-dimensional. Let  $W\subset\mathscr{H}$ be the subspace of constant functions and $P:\mathscr{H} \rightarrow  \mathscr{H} / W=:\tilde{\mathscr{H}}$ be the quotient map.  
 As $W$ is the eigenspace for the eigenvalue $1$ we have $A_i (W) = \set{A_iw \sth w \in W} \subseteq W$ for $i \in \{0, \dots, N-1 \}$ and thus there is 
$\tilde{A}_i :\tilde{\mathscr{H}} \rightarrow \tilde{\mathscr{H}} $, such that 
$\tilde{A}_i \circ P = P \circ A_i$. We call $\{ \tilde{A}_i \}^{N-1}_{i=0}$ the induced harmonic extension matrices.
The energy $\mathscr{E}(\cdot,\cdot)$ is a bilinear form on $\mathscr{H}$ and $\mathscr{E}(u,u)=0$ if and only if $u$ is a constant function on $SG_N$, so the restriction of $\mathscr{E}(\cdot,\cdot)$ on $\tilde{\mathscr{H}}$ is a well-defined inner product that makes $\tilde{\mathscr{H}}$ a Hilbert space.
The following is an immediate consequence of Lemma \ref{lem:eigenvalues2}.
\begin{corollary} \label{lem:eigenvalues3}
Let $\tilde{A}_i$ be a induced harmonic extension matrix for $SG_N$. Then, the eigenvalues of $\tilde{A}^{-1}_i$ are
$\sigma(\widetilde{A}^{-1}_i)=\{\frac{N+2}{N},  N+2\}$.
The corresponding eigenspaces have the following dimensions:
\begin{eqnarray}
\dim \ E_i \big[ \tfrac{N+2}{N} \big] & = & 1 \nonumber \\
\dim \ E_i \big[ N+2 \big] & = & N-2 .\nonumber
\end{eqnarray}
\end{corollary}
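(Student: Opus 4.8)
The plan is to push the eigenstructure of $A_i$ from Lemma~\ref{lem:eigenvalues2} down to the quotient operator $\tilde{A}_i$ and then pass to the inverse. First I would observe that the three eigenspace dimensions in Lemma~\ref{lem:eigenvalues2} sum to $1+1+(N-2)=N=\dim\mathscr{H}$, so $A_i$ is diagonalizable, and that the subspace of constant functions $W=E_i[1]$ is \emph{exactly} the eigenspace for the eigenvalue $1$. Consequently a basis of $\mathscr{H}$ may be chosen consisting of the constant vector together with eigenvectors $v$ for the remaining eigenvalues $\tfrac{N}{N+2}$ and $\tfrac{1}{N+2}$.

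Next I would check that each such $v$, with eigenvalue $\lambda\neq 1$, projects to an eigenvector of $\tilde{A}_i$ with the same eigenvalue. Since $\tilde{A}_i\circ P=P\circ A_i$, we have $\tilde{A}_i(Pv)=P(A_i v)=\lambda\,Pv$, and $Pv\neq 0$ because $v$ lies in an eigenspace meeting $W=E_i[1]$ only in $0$. A short argument then shows these images remain independent in $\tilde{\mathscr{H}}$: any nontrivial linear relation among them would place a nonzero combination of eigenvectors for eigenvalues $\neq 1$ inside $W$, contradicting the directness of the eigenspace decomposition of the diagonalizable matrix $A_i$. As there are $1+(N-2)=N-1=\dim\tilde{\mathscr{H}}$ of them, the operator $\tilde{A}_i$ is diagonalizable with
\[
\sigma(\tilde{A}_i)=\set{\tfrac{N}{N+2},\ \tfrac{1}{N+2}},\qquad \dim E_i\bigl[\tfrac{N}{N+2}\bigr]=1,\qquad \dim E_i\bigl[\tfrac{1}{N+2}\bigr]=N-2.
\]

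Finally, since neither eigenvalue is zero, $\tilde{A}_i$ is invertible, and $\tilde{A}_i^{-1}$ has the same eigenvectors with reciprocal eigenvalues $\tfrac{N+2}{N}$ and $N+2$, the eigenspace dimensions being unchanged; this is precisely the stated conclusion. There is no serious obstacle here, as the underlying content is the general fact that an invariant subspace splits off a factor of the characteristic polynomial and that inversion inverts eigenvalues while preserving eigenspaces. The only point requiring genuine care is confirming that the chosen eigenvectors are non-constant, so that they survive the quotient map $P$, and remain linearly independent after projection; both are immediate from the explicit eigenvectors $(0,1,\ldots,1)^T$ and $(\ldots,0,1,-1,0,\ldots)^T$ exhibited in the proof of Lemma~\ref{lem:eigenvalues2}, none of which is a multiple of $(1,\ldots,1)^T$.
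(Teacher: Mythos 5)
Your argument is correct and is precisely the reasoning the paper treats as immediate: the paper gives no separate proof of this corollary, simply asserting it follows from Lemma~\ref{lem:eigenvalues2}, and your write-up supplies exactly the expected details (eigenvectors for $\lambda\neq 1$ survive the quotient by $W=E_i[1]$ with the same eigenvalue, stay independent, account for all of $\tilde{\mathscr{H}}$, and inversion reciprocates eigenvalues while preserving eigenspaces). No gaps; nothing further needed.
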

\begin{remark}
\label{remark1}
A simple calculation shows that the eigenspaces $E_i \big[ \tfrac{N+2}{N} \big]$ and $E_i \big[ N+2 \big]$ of $\tilde{A}^{-1}_i$ are orthogonal subspaces in $(\tilde{\mathscr{H}},\mathscr{E})$.
\end{remark}

\section{Harmonic Gradients in the sense of Teplyaev}\label{sec:harmgradients}
We define a harmonic gradient on $\SG_N$ following the approach and notation of Teplyaev~\cite{sasha1}, which is closely related to work of Kusuoka~\cite{Kusuoka2}. We require some notation for a cell containing a point described by an infinite word and for a harmonic approximation to the function on such a cell.
\begin{definition} \label{def:truncword}
For $\omega = \omega_1 \omega_2 \cdots \in \Omega_N$ the truncated word $[\omega]_n\in S_N^n$ is $[\omega]_n=\omega_1 \omega_2 \cdots \omega_n$.
\end{definition}

\begin{definition}
\label{def:gradient}
The $n$-level harmonic approximation at word $\omega \in \Omega$ is 
$$\nabla_n f(\omega) = \tilde{A}^{-1}_{[\omega]_n}\tilde{H}(f \circ F_{[\omega]_n}),$$
where $\tilde{H}(g) = P H(g)$, and $H(g)$ is the unique harmonic function that coincides with $g$ on the boundary of $SG_N$. The harmonic gradient at $\omega$ is defined to be 
$$\nabla f(\omega) = \lim_{n \rightarrow \infty} \nabla_n f(\omega)$$
if the limits exist in $\tilde{\mathscr{H}}$.
\end{definition}

Observe that the preceeding is analogous to the way in which secants converge to a tangent in elementary calculus, with harmonic functions playing the role of linear functions (because the latter are harmonic on $\mathbb{R}$). The $n$-level harmonic gradient of $f$ at a point $x$ is akin to a secant modulo constant functions because it is the unique globally harmonic function modulo constants that agrees with $f$ at the boundary points of a cell containing the point.  The matrices $ \tilde{A}^{-1}_{[\omega]_n}$ are used simply to find the boundary values of this harmonic function from data on the cell at scale $n$.  Then the limit of the harmonic approximations as the scale goes to zero is the harmonic gradient.  

The following theorems are essential to our treatment of the topic, and were proved for a resistance form satisfying the identity
\begin{equation}\label{eq:formisselfsim}
\ee(f,f)= \sum_{i=0}^{N-1}  r_i^{-1} \ee(f\circ F_i,f\circ F_i) 
\end{equation}
and a Bernoulli measure with weights $0<\mu_i<1$ in~\cite{sasha1}.

\begin{theorem}[\protect{\cite[Theorem~1]{sasha1}}]\label{thm:sasha1thm1}
Suppose $f\in\dom\Delta_{\mu}$. Then, $\nabla f(\omega)$ exists for every $\omega\in\Omega$ such that
\[ \sum_{n\geq1}r_{[\omega]_n}\mu_{[\omega]_n}||\widetilde{A}_{[\omega]_n}^{-1}|| < \infty \]
\end{theorem}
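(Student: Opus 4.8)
The plan is to prove that the increments $\nabla_{n+1}f(\omega)-\nabla_n f(\omega)$ are absolutely summable in the Hilbert space $(\tilde{\mathscr{H}},\ee)$, so that $\{\nabla_n f(\omega)\}$ is Cauchy and the limit defining $\nabla f(\omega)$ exists. First I would exploit the multiplicative structure of the construction. The harmonic extension matrices satisfy $\widetilde{A}_{[\omega]_{n+1}}^{-1}=\widetilde{A}_{[\omega]_n}^{-1}\widetilde{A}_{\omega_{n+1}}^{-1}$, and the maps compose so that $f\circ F_{[\omega]_{n+1}}=(f\circ F_{[\omega]_n})\circ F_{\omega_{n+1}}$. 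Writing $g_n:=f\circ F_{[\omega]_n}$, these identities factor the increment as
\[ \nabla_{n+1}f(\omega)-\nabla_n f(\omega)=\widetilde{A}_{[\omega]_n}^{-1}\bigl(\widetilde{A}_{\omega_{n+1}}^{-1}\widetilde{H}(g_n\circ F_{\omega_{n+1}})-\widetilde{H}(g_n)\bigr). \]
The bracketed term is precisely the difference between the level-one refinement and the level-zero harmonic interpolant of the single function $g_n$ on the subcell indexed by $\omega_{n+1}$; I will call it the local increment of $g_n$.

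The crux is a uniform local estimate: there is a constant $C$, depending only on $SG_N$, $\ee$, and $\mu$, such that for every $u\in\dom\Delta_\mu$ and every $j\in S_N$,
\[ \norm{\widetilde{A}_j^{-1}\widetilde{H}(u\circ F_j)-\widetilde{H}(u)}_{\ee}\le C\,\norm{\Delta_\mu u}_\infty. \]
The key observation is that the local increment vanishes when $u$ is harmonic: in that case $\widetilde{H}(u\circ F_j)=\widetilde{A}_j\widetilde{H}(u)$, so applying $\widetilde{A}_j^{-1}$ returns $\widetilde{H}(u)$. Hence the local increment depends only on the non-harmonic part $v=u-H(u)$, which vanishes on $V_0$ and satisfies $\Delta_\mu v=\Delta_\mu u$. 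Writing $v=-G(\Delta_\mu u)$ for the Green operator $G$ of $(\ee,\mu)$ and using that the values of $v$ at the finitely many vertices of $V_1$ are bounded (in the uniform norm) linear functionals of $\Delta_\mu u$, I would bound the energy of $\widetilde{H}(v\circ F_j)$ by $C\norm{\Delta_\mu u}_\infty$ and then absorb the finitely many fixed operator norms $\norm{\widetilde{A}_j^{-1}}$, $j\in S_N$, into $C$. I expect this uniform control of the Green potential's vertex data to be the main obstacle, as it is the only place where the hypothesis $f\in\dom\Delta_\mu$ is genuinely used.

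Finally I would combine this with the scaling of the Laplacian. Using the self-similarity of $\ee$ in~\eqref{eq:formisselfsim} together with the Bernoulli scaling of $\mu$, one obtains $\Delta_\mu(f\circ F_{[\omega]_n})=r_{[\omega]_n}\mu_{[\omega]_n}\,(\Delta_\mu f)\circ F_{[\omega]_n}$, and therefore $\norm{\Delta_\mu g_n}_\infty\le r_{[\omega]_n}\mu_{[\omega]_n}\norm{\Delta_\mu f}_\infty$. Applying the operator-norm bound $\norm{\widetilde{A}_{[\omega]_n}^{-1}x}_{\ee}\le\norm{\widetilde{A}_{[\omega]_n}^{-1}}\norm{x}_{\ee}$ to the factored increment, with $x$ the local increment of $g_n$ controlled by the uniform estimate, yields
\[ \norm{\nabla_{n+1}f(\omega)-\nabla_n f(\omega)}_{\ee}\le C\,\norm{\Delta_\mu f}_\infty\, r_{[\omega]_n}\mu_{[\omega]_n}\norm{\widetilde{A}_{[\omega]_n}^{-1}}. \]
Summing over $n$ and invoking the hypothesis $\sum_{n\geq1}r_{[\omega]_n}\mu_{[\omega]_n}\norm{\widetilde{A}_{[\omega]_n}^{-1}}<\infty$ shows the increments are absolutely summable, so $\{\nabla_n f(\omega)\}$ converges in $\tilde{\mathscr{H}}$ and $\nabla f(\omega)$ exists, as claimed.
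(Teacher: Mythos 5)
Your proposal is correct and follows essentially the same route as the paper's (i.e.\ Teplyaev's) argument: factor the increment $\nabla_{n+1}f(\omega)-\nabla_n f(\omega)$ through the harmonic extension matrices, reduce the local term to the Green potential of $\Delta_\mu(f\circ F_{[\omega]_n})$, pull out the factor $r_{[\omega]_n}\mu_{[\omega]_n}$ via the Laplacian scaling identity, bound the remaining Green term uniformly by $\norm{\Delta_\mu f}_\infty$, and sum. The only cosmetic difference is that you peel off $\widetilde{A}^{-1}_{[\omega]_n}$ rather than $\widetilde{A}^{-1}_{[\omega]_{n+1}}$, which matches the hypothesis directly and differs from the paper's version only by the bounded factor $\norm{\widetilde{A}^{-1}_{\omega_{n+1}}}$.
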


\begin{corollary}[\protect{\cite[Corollary~5.1]{sasha1}}]\label{cor:sasha1cor5.1}
Suppose that $f\in \dom\Delta_{\mu}$. Then, $\nabla f(\omega)$ exists for all $\omega\in\Omega$ if
$$ r_j\mu_j ||\widetilde{A}_{j}^{-1} ||<1 $$
For $j=1,\ldots,N$. Moreover, in this case, $\nabla f(\omega)$ is continuous in $\Omega$.
\end{corollary}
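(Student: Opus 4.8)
The plan is to derive both assertions from Theorem~\ref{thm:sasha1thm1} by upgrading the pointwise hypothesis to a uniform geometric estimate. Since the index $j$ ranges over the finite set $\{0,\ldots,N-1\}$, the hypothesis permits setting $\rho = \max_{j} r_j\mu_j\norm{\widetilde{A}_{j}^{-1}} < 1$. For any finite word $[\omega]_n=\omega_1\cdots\omega_n$, the matrix $\widetilde{A}_{[\omega]_n}$ factors as a product of the individual $\widetilde{A}_{\omega_k}$, and both $r$ and $\mu$ are multiplicative over words, so submultiplicativity of the operator norm yields
\[
r_{[\omega]_n}\mu_{[\omega]_n}\norm{\widetilde{A}_{[\omega]_n}^{-1}}
\le \prod_{k=1}^{n} r_{\omega_k}\mu_{\omega_k}\norm{\widetilde{A}_{\omega_k}^{-1}}
\le \rho^{n}.
\]
Consequently $\sum_{n\ge1} r_{[\omega]_n}\mu_{[\omega]_n}\norm{\widetilde{A}_{[\omega]_n}^{-1}} \le \rho/(1-\rho)<\infty$ for \emph{every} $\omega$, and Theorem~\ref{thm:sasha1thm1} gives existence of $\nabla f(\omega)$ throughout $\Omega$.

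For continuity I would show that $\nabla_n f \to \nabla f$ uniformly on $\Omega$ while each $\nabla_n f$ is continuous, so that the limit is continuous. Continuity of $\nabla_n f$ is immediate from Definition~\ref{def:gradient}: the value $\nabla_n f(\omega)=\widetilde{A}_{[\omega]_n}^{-1}\tilde{H}(f\circ F_{[\omega]_n})$ depends only on the truncation $[\omega]_n$, so $\nabla_n f$ is constant on each length-$n$ cylinder, and these cylinders are clopen in the product topology on $\Omega$. For the uniformity, I would use the per-step bound underlying the proof of Theorem~\ref{thm:sasha1thm1}, namely
\[
\norm{\nabla_{m+1} f(\omega)-\nabla_m f(\omega)}
\le C\, r_{[\omega]_m}\mu_{[\omega]_m}\norm{\widetilde{A}_{[\omega]_m}^{-1}},
\]
in which the constant $C$ depends only on $\norm{\Delta_{\mu}f}_{\infty}$ and fixed structural data of $SG_N$. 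Telescoping and inserting the geometric estimate gives
\[
\norm{\nabla f(\omega)-\nabla_n f(\omega)}
\le \sum_{m\ge n}\norm{\nabla_{m+1}f(\omega)-\nabla_m f(\omega)}
\le C\,\frac{\rho^{\,n}}{1-\rho},
\]
which tends to $0$ as $n\to\infty$ at a rate independent of $\omega$.

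Since $\nabla f$ is the uniform limit on $\Omega$ of the continuous functions $\nabla_n f$, it is continuous, which completes the argument; note that finite-dimensionality of $\tilde{\mathscr{H}}$ makes the choice of norm immaterial. The main obstacle is the per-step estimate: I must verify that the bound on $\norm{\nabla_{m+1}f-\nabla_m f}$ produced inside Teplyaev's proof of Theorem~\ref{thm:sasha1thm1} is genuinely uniform in $\omega$---that is, that its constant involves only $\norm{\Delta_{\mu}f}_{\infty}$ and geometric constants of the gasket rather than $\omega$-dependent quantities. This uniformity is exactly what promotes the pointwise existence of the gradient to a globally continuous gradient, and it is the crux of the ``moreover'' clause.
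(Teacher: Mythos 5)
Your proposal is correct and follows essentially the same route as the paper's own treatment: the uniform geometric bound $\rho^n$ obtained from multiplicativity of $r$, $\mu$ and submultiplicativity of the matrix norm, combined with the telescoping per-step estimate $\norm{\nabla_{m+1}f(\omega)-\nabla_m f(\omega)}\le C\,r_{[\omega]_m}\mu_{[\omega]_m}\norm{\widetilde{A}_{[\omega]_m}^{-1}}$ (whose constant the paper derives from the Green-operator representation and depends only on $\norm{\Delta_\mu f}_\infty$), yielding uniform Cauchy convergence and hence a continuous limit. Your explicit observation that each $\nabla_n f$ is locally constant on cylinders, hence continuous, correctly fills in the one step the paper leaves implicit.
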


Teplyaev~\cite{sasha1} points out that Corollary~\ref{cor:sasha1cor5.1} is not applicable to  the Sierpi\'nski Gasket when $\mu$ is the standard (uniform) Bernoulli measure.  The same is true for $\SG_N$ for any $N\geq3$ because one may readily compute that $\|A_j^{-1}\|=N+2$ for each $j=0,\dotsc,N-1$, while each $\mu_j=N^{-1}$ and, as previously noted (see~\eqref{eq:formselfsimonSGN}), each $r_j=\frac{N}{N+2}$, so that $r_j\mu_j\|\tilde{A}_j^{-1}\|=1$.  Teplyaev shows that one can apply Theorem~\ref{thm:sasha1thm1} to certain points on the Sierpi\'nski Gasket, but their description is rather complicated.

\section{A measure on $\SG_3$ for which functions with continuous Laplacian have continuous gradient}\label{sec:dependenceonmu}

On the standard Sierpi\'nski Gasket $\SG_3$ with its usual self-similar resistance form (as defined in Section~\ref{sec:gaskets}) we consider the Laplacian associated to a non-uniform Bernoulli measure defined using the iterated function system of the second level, meaning that the similarities are compositions $F_{ij}=F_i\circ F_j$ of the usual three contractions on $\SG_3$.  The following theorem gives a condition on the measure sufficient to ensure functions with continuous Laplacian have continuous gradients.

\begin{figure}
\label{fig:crazyreu}
\centering
	\includegraphics[width=.4\textwidth,height=.346\textwidth]{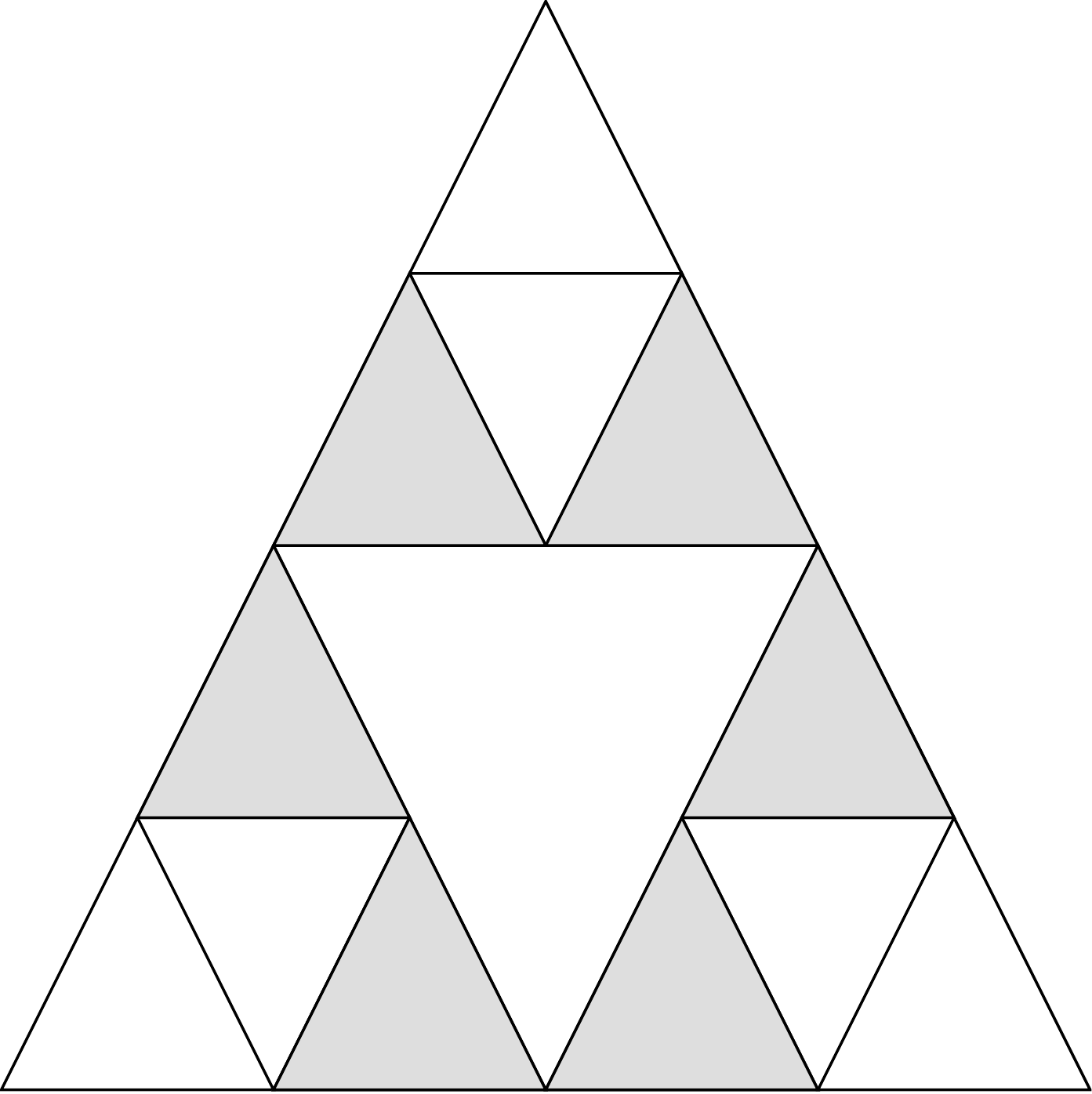}
\ \ 
	\includegraphics[width=.4\textwidth,height=.346\textwidth]{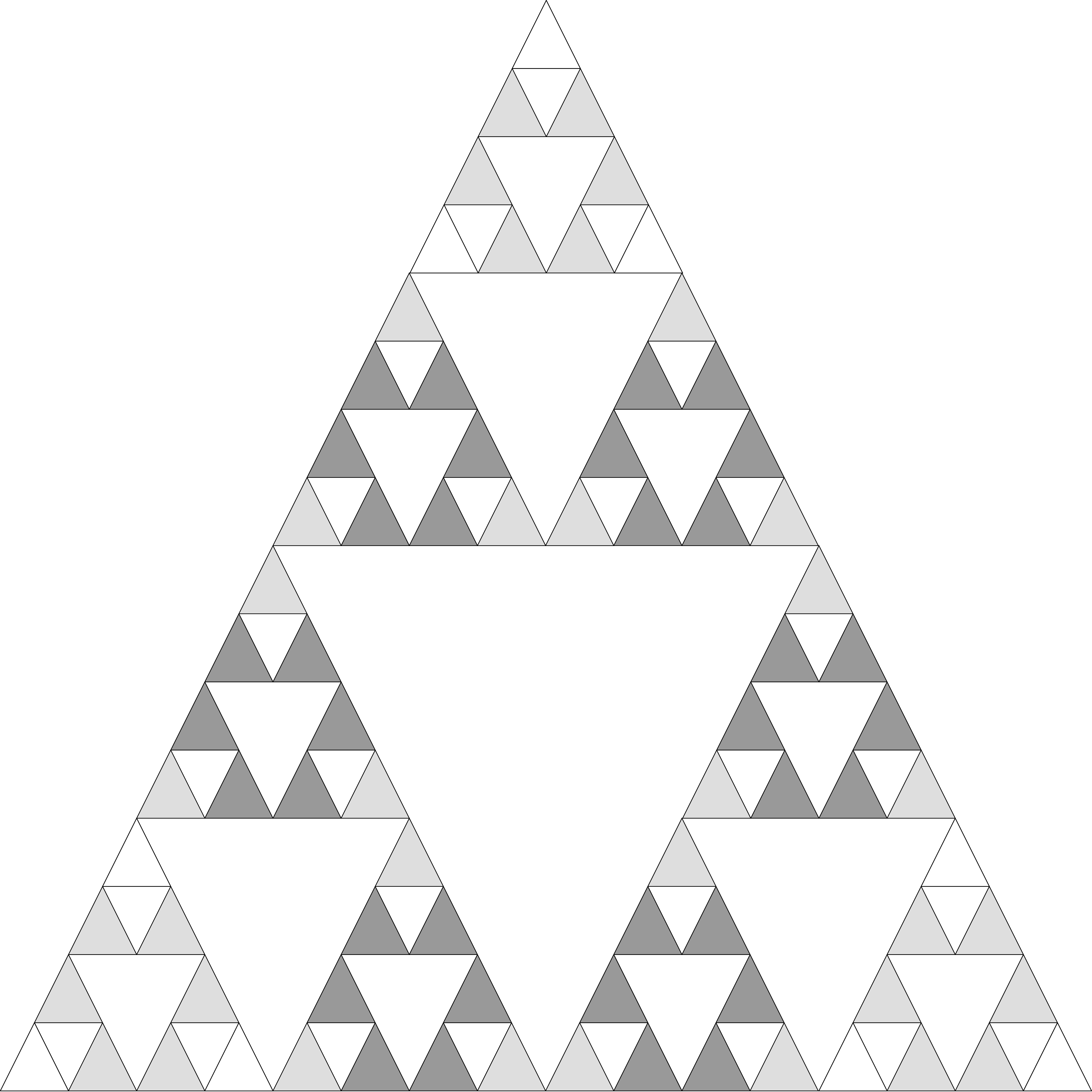}
\caption{$SG_3$ Level $1$ (left) and $2$ (right) with respect to the IFS $\{F_{ij} \}_{i,j \in S}$. Cells shaded according to their measure weights.}
\end{figure}

\begin{theorem}\label{thm:uneqweightmsr}
Let $\mu$ be the Bernoulli measure on $SG_3$ with the weights $\{\mu_{ij}\}_{i,j \in S}$ corresponding to the iterated function system $\{F_{ij}=F_i\circ F_j\}_{i,j \in S}$, so
\begin{eqnarray}
\mu(A)=\sum_{i,j \in S} \mu_{ij} \  \mu (F_{ij}^{-1}(A)) \nonumber
\end{eqnarray}
for any Borel set $A \subset SG_3$. If $\{\mu_{ij}\}_{i,j \in S}$ satisfy
\begin{equation}\label{eq:muvaluesforSGexample}
 \mu_{ij} < \begin{cases}
\frac{1}{9} & i = j \\
\frac{1}{\sqrt{17+4\sqrt{13}}} & i \neq j
\end{cases},
\end{equation}
then $u\in\operatorname{dom}\Delta_{\mu}$ implies that $\nabla u(\omega)$ exists and is continuous for all $\omega\in\Omega$.
\end{theorem}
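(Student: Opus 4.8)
The plan is to reinterpret $SG_3$ as the self-similar set generated by the level-two iterated function system $\{F_{ij}=F_i\circ F_j\}_{i,j\in S}$ and to apply Teplyaev's Corollary~\ref{cor:sasha1cor5.1} to this refined system. First I would check that the standard resistance form $\ee$ is self-similar in the sense of \eqref{eq:formisselfsim} for this IFS: iterating \eqref{eq:formselfsimonSGN} once gives $\ee(f,f)=\sum_{i,j}(r_ir_j)^{-1}\ee(f\circ F_{ij},f\circ F_{ij})$, so the resistance weights are $r_{ij}=r_ir_j=(3/5)^2=9/25$. The measure $\mu$ is Bernoulli with weights $\mu_{ij}$ for this IFS by hypothesis, so the hypotheses of Corollary~\ref{cor:sasha1cor5.1} are met with index set $S\times S$. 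Because harmonic extension composes as $A_{ij}=A_jA_i$ (extend first across $F_i$, then across $F_j$), the induced matrices satisfy $\tilde A_{ij}=\tilde A_j\tilde A_i$ and hence $\tilde A_{ij}^{-1}=\tilde A_i^{-1}\tilde A_j^{-1}$. Corollary~\ref{cor:sasha1cor5.1} then yields existence and continuity of $\nabla u$ on all of $\Omega$ as soon as $r_{ij}\mu_{ij}\norm{\tilde A_{ij}^{-1}}<1$ for every pair $(i,j)$, so the whole problem reduces to computing the operator norms $\norm{\tilde A_i^{-1}\tilde A_j^{-1}}$ in the energy inner product.

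For the diagonal pairs $i=j$ this is immediate: by Remark~\ref{remark1} each $\tilde A_i^{-1}$ is self-adjoint for the energy inner product (it is diagonalizable with energy-orthogonal eigenspaces and real eigenvalues $5/3,\,5$ by Corollary~\ref{lem:eigenvalues3}), so $\norm{(\tilde A_i^{-1})^2}=\norm{\tilde A_i^{-1}}^2=25$. The condition becomes $\tfrac{9}{25}\cdot25\cdot\mu_{ii}=9\mu_{ii}<1$, i.e.\ $\mu_{ii}<1/9$, which is exactly the first line of \eqref{eq:muvaluesforSGexample}.

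The main obstacle is the off-diagonal case $i\neq j$, where $\tilde A_i^{-1}$ and $\tilde A_j^{-1}$ are self-adjoint but do not commute (their eigenspaces, determined by different distinguished vertices, differ), so the product is not self-adjoint and $\norm{\tilde A_i^{-1}\tilde A_j^{-1}}$ is strictly smaller than $\norm{\tilde A_i^{-1}}\norm{\tilde A_j^{-1}}=25$. I would make the computation explicit by coordinatizing $\tilde{\hh}$ via $(u(p_1)-u(p_0),\,u(p_2)-u(p_0))$, in which the energy form has Gram matrix $G=\mat{2&-1\\-1&2}$ and, from \eqref{eq:azero} and its cyclic permutations, $\tilde A_0^{-1}=\tfrac{5}{3}\mat{2&-1\\-1&2}$, $\tilde A_1^{-1}=\tfrac{5}{3}\mat{1&1\\0&3}$, $\tilde A_2^{-1}=\tfrac{5}{3}\mat{3&0\\1&1}$. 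For a representative pair, $\tilde A_0^{-1}\tilde A_1^{-1}=\tfrac{25}{9}M$ with $M=\mat{2&-1\\-1&5}$, and $\norm{M}_G^2$ is the largest eigenvalue of $G^{-1}M^{\mathsf T}GM$; a short calculation gives characteristic values $17\pm4\sqrt{13}$, so $\norm{M}_G=\sqrt{17+4\sqrt{13}}$ and $\norm{\tilde A_0^{-1}\tilde A_1^{-1}}=\tfrac{25}{9}\sqrt{17+4\sqrt{13}}$. By the $S_3$ symmetry of $SG_3$ (the dihedral symmetries act as energy isometries permuting the $\tilde A_i^{-1}$, and transitively on ordered pairs of distinct indices) every off-diagonal product has this same norm. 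The condition $r_{ij}\mu_{ij}\norm{\tilde A_{ij}^{-1}}<1$ then reads $\sqrt{17+4\sqrt{13}}\,\mu_{ij}<1$, i.e.\ $\mu_{ij}<1/\sqrt{17+4\sqrt{13}}$, matching the second line of \eqref{eq:muvaluesforSGexample}.

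Putting both cases together, \eqref{eq:muvaluesforSGexample} is exactly the assertion that $r_{ij}\mu_{ij}\norm{\tilde A_{ij}^{-1}}<1$ for all $i,j\in S$, so Corollary~\ref{cor:sasha1cor5.1} applies and gives existence and continuity of $\nabla u$ everywhere. One point I would verify carefully is that the gradient produced through the level-two IFS (a limit over even scales) coincides with the gradient of Definition~\ref{def:gradient}; since continuity of $\Delta_\mu u$ forces the one-step correction $\nabla_{n+1}u-\nabla_n u$ to tend to zero, the intermediate odd-scale approximations are controlled and the two limits agree.
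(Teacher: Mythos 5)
Your proposal is correct and follows essentially the same route as the paper: apply Corollary~\ref{cor:sasha1cor5.1} to the level-two IFS with $r_{ij}=(3/5)^2$ and check $r_{ij}\mu_{ij}\|\tilde A_{ij}^{-1}\|<1$, where the norms are $25$ for $i=j$ and $\tfrac{25}{9}\sqrt{17+4\sqrt{13}}$ for $i\neq j$. The only differences are that you carry out by hand (via explicit $2\times 2$ matrices, the energy Gram matrix, and the $S_3$ symmetry) the norm computation the paper delegates to Mathematica, and you additionally flag the reconciliation of the even-scale limit with Definition~\ref{def:gradient}, a point the paper passes over silently.
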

\begin{remark}
The theorem provides examples because $\sqrt{17+4\sqrt{13}}<9$, so there are many choices of $\mu_{ij}$ satisfying both~\eqref{eq:muvaluesforSGexample} and $\sum_{i,j\in S}\mu_{ij}=1$.
\end{remark}

\begin{proof}
We apply Corollary~\ref{cor:sasha1cor5.1}, for which purpose we need the values of $r_{ij}$, $\mu_{ij}$ and an estimate of the norms of the matrices $\tilde{A}_{ij}^{-1}$ where $\tilde{A}_{ij}$ is the reduced harmonic extension matrix of the composition $F_{ij}=F_i\circ F_j$.  The $\mu_{ij}$ values are given in~\eqref{eq:muvaluesforSGexample} and $r_{ij} = (\frac{3}{5})^2$ because  the energy scaling for any $F_i$ is $\frac35$, as is apparent by comparing equations~\eqref{eq:formselfsimonSGN} and~\eqref{eq:formisselfsim}.

We can calculate the spectral radius $\rho((\tilde{A}^{-1}_{ij})^\ast \tilde{A}^{-1}_{ij})$  for each $i$ and $j$, using the description in Corollary~\ref{lem:eigenvalues3} and Mathematica, to obtain
\begin{eqnarray}
\sqrt{\rho[(\tilde{A}_{ij}^{-1})^\ast \tilde{A}_{ij}^{-1} ]} = \sqrt{\rho[(\tilde{A}_i^{-1}\tilde{A}_j^{-1})^\ast (\tilde{A}_i
^{-1} \tilde{A}_j^{-1}) ]} = \begin{cases}
25 & i = j \\ \frac{25}{9}\sqrt{17+4\sqrt{13}} & i \neq j
\end{cases}. \nonumber
\end{eqnarray}
Thus, we see that
\begin{eqnarray}
 r_{ij} \mu_{ij} \sqrt{\rho[(\tilde{A}_{ij}^{-1})^\ast \tilde{A}_{ij}^{-1} ]} = \begin{cases}
9 \mu_{ij} & i = j \\
\mu_{ij} \sqrt{17+4\sqrt{13}} & i \neq j
\end{cases}, \nonumber
\end{eqnarray}
and hence from~\eqref{eq:muvaluesforSGexample} that Corollary~\ref{cor:sasha1cor5.1} is applicable because the spectral radius dominates the norm.  It follows that if $u\in\operatorname{dom}\Delta_{\mu}$ then $\nabla u(\omega)$ exists and is continuous for all $\omega\in\Omega$. In Figure~\ref{fig:crazyreu}, we illustrate one such $\mu$.
\end{proof}

A similar argument works on $\SG_N$ for any $N\geq3$, though we do not know a convenient procedure for determining the optimal weights (corresponding to those in~\eqref{eq:muvaluesforSGexample}) if $N>3$.

\section{Gradients on $SG_N$ with the Standard Bernoulli Measure}\label{sec:diffonSGN}

As we noted at the end of Section~\ref{sec:harmgradients}, Corollary~\ref{cor:sasha1cor5.1} is not applicable to the Sierpi\'nski gasket or any $SG_N$, $N\geq3$ when they are equipped with the standard (fully symmetric) measure and Dirichlet form.  Hence in this setting there may be functions $u$ with continuous Laplacian but for which $\nabla u$ fails to exist, at least at some points. Indeed, in~\cite{sasha1}, Teplyaev gives an example which may be used to construct a function $u$ with continuous Laplacian such that $\nabla u$ is undefined on a countable dense set.  This example may readily be generalized to $SG_N$, $N>3$.  However, Teplyaev also proves there is a full $\mu$-measure set of words $\omega\in\Omega$ for which continuity of $\Delta u$ implies existence of $\nabla u(\omega)$. The purpose of this section is to prove a generalization of this result to $SG_N$, $N>3$.

The key idea in Teplyaev's proof of the result mentioned above is that harmonic functions have an improved scaling behavior near points defined by words that are asymptotically sufficiently non-constant.  An appropriate generalization to our context uses the following concept.
\begin{definition} \label{def:block}
A \textbf{$\bm{k}$-block} for an alphabet $S_N = \set{0,1,...,N-1}$ is a length $k$ word $w$ with $k$ distinct letters, meaning $w=w_1 w_2 \cdots w_k$ such that each $w_i \in S_N$ and $w_i \neq w_j$ for all $i \neq j$.
\end{definition}

The key scaling behavior for an \notblock\ is the following estimate.
\begin{lemma}\label{lem:betaN}
Fix $N$.  There is $\beta_N<1$ such that for any \notblock\ $w\in S_N^{N-1}$
\begin{equation*}
	\frac1{N+2}\Bigl\| \tilde{A}^{-1}_w \Bigr\|^{1/(N-1)} \leq \beta_N.
	\end{equation*}
\end{lemma}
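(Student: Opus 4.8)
The plan is to diagonalize the matrices $\tilde{A}_i^{-1}$ enough to see why a product taken over an $(N-1)$-block must strictly contract. By Corollary~\ref{lem:eigenvalues3} each $\tilde{A}_i^{-1}$ has only two eigenvalues, $\tfrac{N+2}{N}$ on a line $L_i:=E_i[\tfrac{N+2}{N}]$ and $N+2$ on the complementary $(N-2)$-dimensional space, and by Remark~\ref{remark1} these eigenspaces are $\ee$-orthogonal. Since they are orthogonal and together span $\tilde{\hh}$, the operator $\tilde{A}_i^{-1}$ is self-adjoint with respect to the energy inner product and may be written
\begin{equation*}
	\tilde{A}_i^{-1} = (N+2)\Bigl(I - \tfrac{N-1}{N}Q_i\Bigr),
\end{equation*}
where $Q_i$ is the $\ee$-orthogonal projection onto $L_i$: the right-hand side acts as $\tfrac{N+2}{N}$ on $L_i$ and as $N+2$ on $L_i^{\perp}$, matching the spectrum. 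Setting $B_i := I - \tfrac{N-1}{N}Q_i = \tfrac{1}{N+2}\tilde{A}_i^{-1}$ and recalling that $\tilde{A}_w^{-1}$ is the product of the $N-1$ matrices $\tilde{A}_{w_j}^{-1}$, one obtains $\tfrac{1}{N+2}\norm{\tilde{A}_w^{-1}}^{1/(N-1)} = \norm{M_w}^{1/(N-1)}$, where $M_w$ is the corresponding product of the $B_{w_j}$. The lemma thus reduces to bounding $\norm{M_w}$ by some $\beta_N<1$.

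Next I would record the elementary contraction estimate for a single factor. Since $Q_i$ is a rank-one orthogonal projection and $c:=\tfrac{N-1}{N}\in(0,1)$, a short computation gives $\norm{B_i x}^2 = \norm{x}^2 - c(2-c)\norm{Q_i x}^2$, so $\norm{B_i}=1$ and, crucially, $\norm{B_i x}=\norm{x}$ holds if and only if $x\perp L_i$, in which case $B_i x=x$. Fixing an \notblock\ $w=w_1\cdots w_{N-1}$, each factor of $M_w$ has norm at most one, so $\norm{M_w}\leq1$; to obtain strictness, suppose $\norm{M_w v}=\norm{v}$ for some unit vector $v$. Telescoping the identity above through the partial products forces equality at every stage, so $v$ is fixed by each $B_{w_j}$ and therefore orthogonal to every line $L_{w_j}$, $j=1,\dots,N-1$.

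This is where the combinatorial hypothesis enters, and it is the crux of the argument. The lines are $L_i=\spn\bigl(P(e_i)\bigr)$, with $e_0,\dots,e_{N-1}$ the standard basis of boundary values: by Lemma~\ref{lem:eigenvalues2} the $\tfrac{N}{N+2}$-eigenvector of $A_i$ is $e_i$ modulo constants. Since the kernel of $P$ is the line of constants, the only linear relation among $P(e_0),\dots,P(e_{N-1})$ is $\sum_i P(e_i)=0$; hence any $N-1$ of these vectors are independent and span the $(N-1)$-dimensional space $\tilde{\hh}$. As the letters of an \notblock\ are distinct, $\{P(e_{w_j})\}_{j=1}^{N-1}$ is a basis of $\tilde{\hh}$, so the only vector orthogonal to all of them is $0$, contradicting $\norm{v}=1$. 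Thus $\norm{M_w}<1$ for every \notblock. Finally, since there are only finitely many \notblock s, taking $\beta_N:=\max_w \norm{M_w}^{1/(N-1)}$ yields a single constant strictly below $1$, which proves the lemma. The substance is therefore the general-position observation about the projection directions $L_i$ rather than any delicate estimate; the upgrade from ``strict for each block'' to ``uniform over all blocks'' is immediate from finiteness.
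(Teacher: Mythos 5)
Your proof is correct and follows essentially the same route as the paper: reduce to a single $(N-1)$-block by finiteness, and show the norm of the product is strictly less than $(N+2)^{N-1}$ because the distinct letters force the $(N+2)$-eigenspaces (equivalently, the orthogonal complements of the lines $L_{w_j}$) to have trivial intersection. The only difference is that you carefully justify, via the projection decomposition and the telescoping equality argument, the step the paper merely asserts — that absence of a common maximal eigenvector yields strict inequality — which is a worthwhile addition rather than a departure.
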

\begin{proof}
Since the set of \notblock s is finite it suffices to show the estimate for an arbitrary \notblock\ $w=w_1\dotsc w_{N-1}$.  Then $\tilde{A}^{-1}_w= \tilde{A}^{-1}_{w_{N-1}}\dotsm \tilde{A}^{-1}_{w_1}$ and the maximal eigenvalue for each $ \tilde{A}^{-1}_{w_j}$ is $N+2$, so the result is true unless there is a  vector common to the $(N+2)$-eigenspaces of all of the  $\tilde{A}^{-1}_{w_j}$.  However we determined these eigenspaces explicitly in the proof of Lemma~\ref{lem:eigenvalues2}.  Recalling that passage from $A_i$ to $\tilde{A}_i$ eliminated the constant eigenspace (which was common to all $A_i$), we see that the eigenvectors of $\tilde{A}_i^{-1}$ with eigenvalue $N+2$ correspond to vectors in $\mathbb{R}^N$ that are orthogonal to the constants and to the unit vector in  the $i^{\text{th}}$ direction.  A vector common to the eigenspaces of all $\tilde{A}^{-1}_{w_j}$ would then need to be orthogonal to the constants and to the unit vector in the $w_j$ direction for $j=1,\dotsc,N-1$, thus to all of $\mathbb{R}^N$.  This shows the estimate for an arbitrary \notblock\ and proves the lemma.
\end{proof}

\begin{remark}
One can compute $\beta_N$ explicitly, but we do not know an elementary way to do this for general $N$.  In~\cite{sasha1} it is shown that $\beta_3=\sqrt{\frac{7+\sqrt{13}}{18}}$.
\end{remark}

The significance of a \notblock\ from our perspective is that the harmonic gradient exists at the point $F_w(X)$  if $w$ has sufficient asymptotic density of \notblock s.  The density is counted using the following. 

\begin{definition} \label{def:countingfunction}
The \textbf{block counting function} $C_N \colon \Omega \times \nn \to \nn \cup \set{0}$ is defined for  $\omega \in \Omega$ by
\[C_N(\omega,n) = \# \{ i \in \nn \sth i \leq n - (N-2), \ [i, i+N-2] \text{ is an $(N-1)$-block}\}.\]
\end{definition}
The following theorem now provides a criterion sufficient for existence of the harmonic gradient.

\begin{theorem} \label{thm2}
Let $u\colon SG_N \to \rr$ and suppose $\Delta_\mu u$ is continuous, where $\mu$ is the standard Bernoulli measure.
Then $\nabla u(\omega)$ is defined at every $\omega \in \Omega$ such that
\begin{equation}
 \liminf_{n\to\infty}\frac{C_N(\omega,n)}{\log n} >\frac1{|\log \beta_N|}.\label{thm2eqn}
\end{equation}
\end{theorem}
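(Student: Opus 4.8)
The plan is to verify that the hypothesis~\eqref{thm2eqn} forces the series in Theorem~\ref{thm:sasha1thm1} to converge, so that $\nabla u(\omega)$ exists. Recall that for the standard measure on $SG_N$ we have $r_j = \frac{N}{N+2}$ and $\mu_j = N^{-1}$ for every $j$, so that $r_{[\omega]_n}\mu_{[\omega]_n} = \bigl(\frac{1}{N+2}\bigr)^n$. Hence the general term of the series becomes $\bigl(\frac{1}{N+2}\bigr)^n \|\tilde A_{[\omega]_n}^{-1}\|$, and the whole problem reduces to controlling the growth of $\|\tilde A_{[\omega]_n}^{-1}\|$ against the decay factor $(N+2)^{-n}$.

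The key is to exploit the submultiplicativity of the operator norm together with Lemma~\ref{lem:betaN}. First I would factor the word $[\omega]_n$ by locating the disjoint $(N-1)$-blocks that the counting function $C_N(\omega,n)$ detects. For each occurrence of an $(N-1)$-block one may group the corresponding $N-1$ consecutive factors $\tilde A_{\omega_{i+N-2}}^{-1}\dotsm \tilde A_{\omega_i}^{-1}$ and apply the lemma, which gives for that group a bound $\|\cdot\| \le \bigl((N+2)\beta_N\bigr)^{N-1}$, i.e.\ a saving of a factor $\beta_N^{N-1}$ relative to the trivial bound $(N+2)^{N-1}$ obtained from the maximal eigenvalue $N+2$. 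The remaining factors are each bounded trivially by $N+2$. After a careful but routine bookkeeping of which indices are consumed by blocks, the net estimate takes the shape
\[
	\Bigl(\tfrac{1}{N+2}\Bigr)^n \bigl\| \tilde A_{[\omega]_n}^{-1} \bigr\| \le C\,\beta_N^{\,C_N(\omega,n)}
\]
for a constant $C$ independent of $n$; the exponent is (up to a bounded additive loss from overlaps at block boundaries) the number of blocks counted by $C_N$.

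The final step is to convert the hypothesis~\eqref{thm2eqn} into summability. If $\liminf_{n\to\infty} C_N(\omega,n)/\log n = L > 1/|\log\beta_N|$, then for all large $n$ we have $C_N(\omega,n) \ge L'\log n$ for some $L'$ with $L'|\log\beta_N| > 1$, whence $\beta_N^{C_N(\omega,n)} \le n^{-L'|\log\beta_N|}$ is summable as a $p$-series with exponent strictly greater than $1$. Thus $\sum_n r_{[\omega]_n}\mu_{[\omega]_n}\|\tilde A_{[\omega]_n}^{-1}\| < \infty$ and Theorem~\ref{thm:sasha1thm1} yields existence of $\nabla u(\omega)$.

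The main obstacle I anticipate is the combinatorial bookkeeping in the factorization: the blocks counted by $C_N$ may overlap (the definition counts every starting index $i$ for which $[i,i+N-2]$ is a block, not a maximal disjoint family), so one cannot naively assign one factor $\beta_N^{N-1}$ per counted index. The care needed is to extract a disjoint subcollection of blocks whose cardinality is comparable to $C_N(\omega,n)$ — losing at most a fixed factor depending only on $N$ — and to absorb the constant-eigenvalue saving correctly across block boundaries where consecutive grouped matrices might share the critical $(N+2)$-eigenvector. Lemma~\ref{lem:betaN} rules out a common eigenvector within a single $(N-1)$-block, which is exactly what makes each disjoint block contribute a genuine contraction; translating that per-block gain into the stated logarithmic-density criterion is the heart of the argument.
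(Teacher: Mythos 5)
Your proposal is correct and follows essentially the same route as the paper: the paper packages your block-factorization as an inductive lemma showing $\bigl\|\tilde A^{-1}_{[\omega]_n}\bigr\|\le (N+2)^n\beta_N^{C_N(\omega,n)}$ and then feeds this, together with $r_{[\omega]_n}\mu_{[\omega]_n}=(N+2)^{-n}$, into Theorem~\ref{thm:sasha1thm1} exactly as you do. The overlap bookkeeping you flag resolves with no loss at all: a greedy extraction retains at least $C_N(\omega,n)/(N-1)$ disjoint blocks and each contributes a saving of $\beta_N^{N-1}$, so the exponent $C_N(\omega,n)$ survives intact (the paper's induction achieves the same thing via the observation that $C_N(\omega,n+1)-C_N(\omega,n-N+2)\le N-1$), and the worry about shared $(N+2)$-eigenvectors across group boundaries is moot since submultiplicativity of the operator norm applies to any parenthesization of the product.
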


\begin{lemma}
\label{lemma4}
Let $\omega \in \Omega_N$. Then
\begin{equation*}
\bigl\|\tilde{A}^{-1}_{[\omega]_n}\bigr\|\leq (N+2)^n \beta_N^{C_N(\omega,n)}.
\end{equation*}
\end{lemma}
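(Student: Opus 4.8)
The plan is to bound the operator norm $\bigl\|\tilde A^{-1}_{[\omega]_n}\bigr\|$ by factoring the word $[\omega]_n$ into consecutive length-$(N-1)$ pieces and counting how many of those pieces are \notblock s. Since
\[
\tilde A^{-1}_{[\omega]_n} = \tilde A^{-1}_{\omega_n}\tilde A^{-1}_{\omega_{n-1}}\dotsm \tilde A^{-1}_{\omega_1},
\]
submultiplicativity of the operator norm gives $\bigl\|\tilde A^{-1}_{[\omega]_n}\bigr\| \leq \prod_{j=1}^n \bigl\|\tilde A^{-1}_{\omega_j}\bigr\|$, and by Corollary~\ref{lem:eigenvalues3} the largest eigenvalue, hence the norm, of each single factor $\tilde A^{-1}_{\omega_j}$ is exactly $N+2$. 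This alone yields the trivial bound $(N+2)^n$; the content of the lemma is that each disjoint \notblock\ window appearing in $[\omega]_n$ improves a block of $N-1$ factors of $(N+2)$ to the better estimate supplied by Lemma~\ref{lem:betaN}, namely $\bigl\|\tilde A^{-1}_w\bigr\| \leq \bigl((N+2)\beta_N\bigr)^{N-1}$ for a \notblock\ $w\in S_N^{N-1}$.

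First I would make precise how $C_N(\omega,n)$ relates to a collection of \emph{disjoint} \notblock\ windows. The counting function of Definition~\ref{def:countingfunction} counts \emph{all} starting indices $i$ (possibly overlapping) for which $[i,i+N-2]$ is a \notblock. The plan is to pass to a disjoint subcollection: I would extract windows greedily from left to right, so that I obtain at least $C_N(\omega,n)/(N-1)$ pairwise disjoint \notblock\ windows — or, more carefully, I would argue that one can select disjoint windows whose total count is controlled by $C_N(\omega,n)$ in exactly the way the stated exponent requires. In the present normalization the exponent on $\beta_N$ is $C_N(\omega,n)$ itself rather than $C_N(\omega,n)/(N-1)$, which strongly suggests that the intended reading is to group the factors so that each unit of the counting function contributes one factor of $\beta_N$; the cleanest route is to write $[\omega]_n$ as a product over positions, assign to each of the $C_N(\omega,n)$ counted blocks its improvement, and bound the remaining factors by $N+2$ each.

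Concretely, I would organize the estimate as
\[
\bigl\|\tilde A^{-1}_{[\omega]_n}\bigr\|
\leq (N+2)^{\,n - (N-1)B}\,\bigl((N+2)\beta_N\bigr)^{(N-1)B}
= (N+2)^n\,\beta_N^{(N-1)B},
\]
where $B$ is the number of disjoint \notblock\ windows selected. The desired conclusion $(N+2)^n\beta_N^{C_N(\omega,n)}$ then follows provided the selection guarantees $(N-1)B \geq C_N(\omega,n)$, or provided the bookkeeping is arranged so that the exponent of $\beta_N$ is exactly the raw count $C_N(\omega,n)$. The key tools are therefore purely the submultiplicativity of the norm, the single-step bound $\|\tilde A^{-1}_{\omega_j}\| = N+2$ from Corollary~\ref{lem:eigenvalues3}, and the block improvement from Lemma~\ref{lem:betaN}; everything reduces to matching the combinatorial counting to the grouping of factors.

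The main obstacle I anticipate is precisely this combinatorial bookkeeping: reconciling the overlap-allowing definition of $C_N(\omega,n)$ with the need for a disjoint product decomposition, and verifying that each counted block really can be cashed in for a full factor of $\beta_N$ without double-counting the matrices $\tilde A^{-1}_{\omega_j}$ shared between overlapping windows. If overlaps genuinely occur, one must either show they do not reduce the number of usable improvements below $C_N(\omega,n)$, or accept a disjointness reduction and track the resulting constant. Resolving this cleanly — likely by a careful left-to-right extraction of disjoint windows together with the observation that a \notblock\ improvement applied to any subword factor still leaves the complementary factors bounded by $N+2$ — is the crux, after which the norm estimate is immediate.
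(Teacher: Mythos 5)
Your decomposition is sound and uses the same two ingredients as the paper's proof --- the single-letter bound $\bigl\|\tilde A^{-1}_{\omega_j}\bigr\|\leq N+2$ from Corollary~\ref{lem:eigenvalues3} and the block improvement $\bigl\|\tilde A^{-1}_w\bigr\|\leq\bigl((N+2)\beta_N\bigr)^{N-1}$ from Lemma~\ref{lem:betaN} --- but the paper packages the bookkeeping as an induction on $n$ rather than a one-shot greedy factorization: at each step it either appends a single factor of $N+2$ (when the last $N-1$ letters of $[\omega]_{n+1}$ do not form a \notblock) or jumps back $N-1$ letters, applies Lemma~\ref{lem:betaN} to the terminal block, and uses the observation that $C_N(\omega,n+1)-C_N(\omega,n-N+2)\leq N-1$ (at most $N-1$ new counted starting indices can appear in a window of length $N-1$) to convert the gain $\beta_N^{N-1}$ into the required increment of the exponent. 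That observation is precisely the resolution of the overlap issue you identify as the crux, and your left-to-right version does close it: if $i_1<\dots<i_B$ are the greedily selected counted starting indices, with $i_{k+1}$ the least counted index $\geq i_k+N-1$, then every counted index lies in some interval $[i_k,i_k+N-2]$, so $C_N(\omega,n)\leq (N-1)B$ and, since $\beta_N<1$, your display $\bigl\|\tilde A^{-1}_{[\omega]_n}\bigr\|\leq (N+2)^n\beta_N^{(N-1)B}\leq (N+2)^n\beta_N^{C_N(\omega,n)}$ gives the lemma. So your route is correct once this one line is supplied; what the induction buys is that the disjoint-selection argument never has to be made explicit, while your formulation makes more transparent why the exponent can be the raw (overlap-counting) quantity $C_N(\omega,n)$ rather than $C_N(\omega,n)/(N-1)$: each disjoint block contributes $N-1$ factors of $\beta_N$, enough to pay for all $N-1$ overlapping blocks it can intersect.
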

\begin{proof}
The proof is inductive with base case $n=1$, for which $C_N(\omega,1)=0$ (by definition) and we are bounding the norm of $\tilde{A}^{-1}_{w_1}$ by its maximal eigenvalue.  For the inductive step we consider two cases.

If the last $N-1$ letters  of $[\omega]_{n+1}$ do not form a \notblock\ then $C_N(\omega,n) = C_N(\omega,n+1)$ and bounding the norm of $\tilde{A}^{-1}_{w_{n+1}}$ by the maximal eigenvalue $(N+2)$ we have from the induction hypothesis
\[
\norm{\tilde{A}^{-1}_{[\omega]_{n+1}}}
\leq (N+2) \norm{\tilde{A}^{-1}_{[\omega]_n}}
\leq (N+2)^{n+1} \beta_N^{C_N(\omega,n)}
=(N+2)^{n+1} \beta_N^{C_N(\omega,n+1)}.
\]
In the other case, where the last $N-1$ letters form a \notblock, we instead use Lemma~\ref{lem:betaN} on this block and the inductive bound on for $n+1-(N-1)$ to obtain
\begin{align*}
\norm{\tilde{A}^{-1}_{[\omega]_{n+1}}}
& \leq \norm{\tilde{A}^{-1}_{[\omega]_{n-N+2}}} \norm{\tilde{A}^{-1}_{\omega_{n-N+3} } \dots \tilde{A}^{-1}_{\omega_{n+1} }} \\
& \leq (N+2)^{n-N+2}\beta_N^{C_N(\omega,n-N+2)} (N+2)^{N-1} \beta_N^{N-1} \\
& \leq (N+2)^{n+1}\beta_N^{C_N(\omega,n-N+2)+N-1} \\
&\leq (N+2)^{n+1} \beta_N^{C_N(\omega,n+1)},
\end{align*}
where we also used the fact that $C_N(\omega,n+1) -C_N(\omega,n-N+2)\leq N-1$, which is immediate from the definition.
\end{proof}

\begin{proof}[Proof of Theorem~\ref{thm2}]
For the standard Bernoulli measure and Dirichlet form on the higher dimensional Sierpi\'nski Gasket we have $r_{[\omega]_{n}} \mu_{[\omega]_{n}}=\frac{1}{(N+2)^{n}}$, as noted after Corollary~\ref{cor:sasha1cor5.1}.  Inserting this and the result of Lemma~\ref{lemma4} we compute, using that $0<\beta_N<1$ that
\begin{align*}
	\sum_{n=1}^\infty r_{[w]_n} \mu_{[w]_n} \bigl\| \tilde{A}^{-1}_{[w]_n}\bigr\| 
	\leq	\sum_{n=1}^\infty \beta_N^{C_N(\omega,n)}
	\leq \sum_{n=1}^\infty n^{ -C_N(\omega,n)|\log \beta_N|/\log n}
\end{align*}
which is convergent because 
\begin{equation*}
	\frac{C_N(\omega,n)\log|\beta_N|}{\log n}>\frac12\biggl(1+\liminf_n \frac{C_N(\omega,n)\log|\beta_N|}{\log n}\biggr)>1
	\end{equation*}
for all sufficiently large $n$. This gives the result by Theorem~\ref{thm:sasha1thm1}.
\end{proof}

\begin{theorem} \label{thm:measureone}
Let $u\colon SG_N \to \rr$ and suppose $\Delta_\mu u$ is continuous, where $\mu$ is the standard Bernoulli measure.
Then $\nabla u(\omega)$ is defined $\mu$-a.e.
\end{theorem}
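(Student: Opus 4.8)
The plan is to derive this from Theorem~\ref{thm2}: it suffices to show that the set of words $\omega$ satisfying the density condition~\eqref{thm2eqn} carries full $\mu$-measure. I will in fact prove the stronger statement that
\[
\liminf_{n\to\infty}\frac{C_N(\omega,n)}{\log n}=+\infty \qquad\text{for $\mu$-a.e.\ $\omega\in\Omega$,}
\]
which certainly exceeds the threshold $1/\lvert\log\beta_N\rvert$. The mechanism is that, for the standard Bernoulli measure, the coordinate functions $\omega\mapsto\omega_k$ are i.i.d.\ and uniform on $S_N$, so $(N-1)$-blocks occur with positive frequency and $C_N(\omega,n)$ grows \emph{linearly} in $n$, vastly faster than $\log n$.

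First I would record the elementary fact that a window of $N-1$ consecutive i.i.d.\ uniform letters consists of $N-1$ distinct letters with the fixed positive probability $p=N!/N^{N-1}$. The function $C_N(\omega,n)$ counts the sliding windows $[i,i+N-2]$, $i\leq n-(N-2)$, that are $(N-1)$-blocks, and the one genuine difficulty is that overlapping windows are \emph{not} independent, so the strong law of large numbers cannot be applied to them directly. To circumvent this I would pass to a disjoint sub-collection: partition the positions into consecutive windows $[1,N-1],[N,2(N-1)],\dots$ and let $Y_k$ be the indicator that the $k$-th of these is an $(N-1)$-block. These $Y_k$ are i.i.d.\ Bernoulli$(p)$, and since each such disjoint window is among those counted by $C_N$ we have $\sum_{k=1}^{M}Y_k\leq C_N(\omega,n)$, where $M=\lfloor (n-(N-2))/(N-1)\rfloor$.

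Applying the strong law of large numbers gives $M^{-1}\sum_{k=1}^{M}Y_k\to p$ almost surely, whence for $\mu$-a.e.\ $\omega$
\[
C_N(\omega,n)\ \geq\ \sum_{k=1}^{M}Y_k\ \sim\ \frac{p}{N-1}\,n ,
\]
so dividing by $\log n$ and letting $n\to\infty$ yields the claimed infinite liminf. For every such $\omega$ the hypothesis of Theorem~\ref{thm2} is satisfied, so $\nabla u(\omega)$ exists; hence $\nabla u$ is defined $\mu$-a.e. The only obstacle is the window overlap noted above, and restricting to a disjoint sub-collection resolves it at the cost of an irrelevant constant factor, since we need only super-logarithmic---indeed linear---growth of $C_N$.
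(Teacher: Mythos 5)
Your proof is correct and follows essentially the same route as the paper: both reduce the statement to Theorem~\ref{thm2} and both pass to disjoint windows of length $N-1$ to obtain i.i.d.\ block indicators, concluding that $C_N(\omega,n)$ grows linearly for $\mu$-a.e.\ $\omega$ and therefore beats the logarithmic threshold. The only difference is the concluding tool: you invoke the strong law of large numbers, whereas the paper uses a Chernoff bound plus the first Borel--Cantelli lemma, which gives the quantitative exponential tail estimate the authors later exploit in remarking that the exceptional set has Hausdorff dimension strictly less than that of $SG_N$.
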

\begin{proof}
We give a crude but sufficient lower bound on the set of words for which $C(\omega,n)$ satisfies the estimate in Theorem~\ref{thm2}.  Suppose we split a word of length $n-(N-2)$ up into disjoint intervals of length $N-1$. Evidently there are at least $k=\frac{n}{N-1}-2$ of these.  The probability of any one such interval being an $N-1$ block is $p_N=(N!)N^{-N}$, so the probability that $C_N(\omega,n)<\frac{\log n}{|\log\beta_N|}:=l$ does not exceed that of having $l$ successes in $k$ binomial trials where success has probability $p_N$.  Using Chernoff's inequality to bound the stated probability by $\exp\bigl(-(kp_N-l)^2/(2kp_N)\bigr)$ and taking $n$ large enough that $kp_N>2l$ we have probability less than $\exp(-kp_N/8)=\exp(-nq_N)$ for a $q_N>0$ that does not depend on $n$.  The latter is summable over $n$, so the bound required in  Theorem~\ref{thm2} follows from the first Borel-Cantelli lemma.
\end{proof}
It is perhaps interesting to note that the preceding reasoning allows one to bound the Hausdorff dimension of the set of points at which $\nabla u$ is undefined by a value strictly less than the Hausdorff dimension of $SG_N$; we omit the details.

\section{Acknowledgements}
The authors are grateful to Alexander Teplyaev and Daniel Kelleher for helpful discussions.

\bibliography{HigherSGgradrefs}
\bibliographystyle{plain}

\end{document}